\documentclass[a4paper, 12pt, reqno]{amsart}
\usepackage{amsmath,amsthm,amssymb,framed,verbatim}
\usepackage[hidelinks]{hyperref}
\usepackage{xcolor}
\usepackage{graphicx}
\usepackage[a4paper,  margin=2.5cm]{geometry}

\newcommand{\norm}[1]{\Vert #1 \Vert}  
\newcommand{\abs}[1]{\vert #1 \vert}  
\newcommand{\ov}{\overline}

\DeclareMathOperator{\tr}{trace}

\DeclareMathOperator{\Hess}{Hess}

\newcommand{\dif}{\mathrm{d}}
\newcommand{\ii}{\mathrm{i}}
\newcommand{\CC}{\mathbb{C}}

\newcommand{\HH}{\mathcal{H}}

\newcommand{\RR}{\mathbb{R}}   
 
\newcommand{\NN}{\mathbb{N}}
\newcommand{\Ss}{\mathbb{S}}
\newcommand{\Jj}{\mathcal{J}} 
\newcommand{\Aa}{\mathcal{A}}

\newtheorem{te}{Theorem}
\newtheorem{pr}{Proposition}

\newtheorem{lm}{Lemma}
\theoremstyle{definition}
\newtheorem{de}{Definition} 

\newtheorem{ex}{Example}

\begin{document}

\title[Index of conformal Hopf maps]{The index of conformal Hopf maps}

\author{E. Loubeau}
\address{University of Brest, LMBA UMR 6205, F-29870 Brest, France}
\email{loubeau@univ-brest.fr}

\author{R. Slobodeanu}
\address{University of Bucharest, Faculty of Physics, P.O. Box Mg-11, RO--077125 Bucharest-M\u agurele, Romania}
\email{radualexandru.slobodeanu@g.unibuc.ro}

\subjclass[2010]{58E20, 53C43, 58E30, 53B50.}

\date{\today}

\keywords{Harmonic map, calculus of variations, stability.}

\begin{abstract}
\noindent
We compute the index and nullity of any harmonic morphism from $\Ss^3$ to $\Ss^2$.
\end{abstract}

\maketitle

\section{\textcolor{blue}{\textbf{Introduction}}}
Given two Riemannian manifolds $(M, g)$ and $(N,h)$, the \textit{Dirichlet energy} of a (smooth) map $\varphi: M \to N$ is
\begin{equation*}
E(\varphi) =
\frac{1}{2} \int_M \abs{\dif \varphi}^2 \upsilon_{g}.
\end{equation*}
If for any (smooth, compactly supported) 1-parameter variation $\{\varphi_t\}$ of $\varphi=\varphi_0$ we have $\frac{d}{d t} \big\lvert _{t=0}E(\varphi_t)=0$, then $\varphi$ is called a \textit{harmonic map} \cite{eell} and is characterized by the equation $\tr \nabla \dif \varphi=0$.  At such critical point, if $M$ is compact, we  have \cite{smi}
$$
\frac{d^2}{d t^2} \Bigl\lvert _{t=0}E(\varphi_t)=
\Hess_{\varphi}(v,v)=\int_M \left \langle \mathcal{J}_{\varphi}(v), v \right \rangle \upsilon_g,
$$ 
where   $v(p) = \frac{\partial \varphi_t}{\partial t}(p) \big\lvert _{t=0} \in T_{\varphi(p)}N$ at any $p\in M$ and
$$\mathcal{J}_{\varphi}:\Gamma(\varphi^{-1}TN) \to \Gamma(\varphi^{-1}TN), \quad
 \mathcal{J}_{\varphi}(v)=-\tr (\nabla^\varphi)^2 v - \tr R^N(v,\dif\varphi)\dif\varphi$$ 
is a linear self-adjoint elliptic operator, called the Jacobi operator of $\varphi$. Here $\nabla^\varphi$ denotes the pull-back connection in the pull-back bundle $\varphi^{-1}TN$.

The number $\dim (\ker \mathcal{J}_{\varphi})=\mathrm{nullity}(\varphi)$ is the \textit{nullity} of $\varphi$ and the sum of the dimensions
of the eigenspaces of $\mathcal{J}_{\varphi}$ corresponding to the negative eigenvalues is the (Morse) \textit{index}, denoted as $\mathrm{index}(\varphi)$.

If $\frac{d^2}{d t^2} \big\lvert _{t=0}E(\varphi_t)\geqslant 0$, i.e. $\mathrm{index}(\varphi)=0$, then 
$\varphi$ is a \textit{weakly stable} harmonic map. By a standard argument of Xin \cite{xin}, if $m > 2$, then there is no non-constant stable harmonic map defined on the unit round sphere $\Ss^m$. In particular, the \textit{Hopf map} $\phi: \Ss^3 \to \Ss^2$ is unstable and its index was computed to be equal to $4$ \cite{loub, ura}. This is due to the four non-Killing conformal fields of $\Ss^3$ (gradients of $\lambda_1$-eigenfunctions) along which the energy decreases, as shown more generally in  \cite{soufi}: 

\begin{center}
$\mathrm{index}(\varphi)\geqslant m+1$ for any $\varphi: \Ss^m \to N$ and $m\geqslant 3$.    
\end{center}

\noindent A \textit{low-index harmonic map} on the round sphere is a harmonic map for which equality holds \cite{kalt}. The Hopf map $\Ss^{2n+1} \to \CC P^n$ and $Id_{\Ss^m}$ are  low-index harmonic maps  \cite{chen, lar, smi, sur}.

For nullity, since any harmonic map $\varphi$ satisfies \cite{ohn}
\begin{equation} \label{ohnita}
\mathcal{J}_{\varphi}(\dif \varphi(X))=
-\dif \varphi(\tr \nabla^2 X + \mathrm{Ric}^M (X)) -2\tr \nabla \dif \varphi(\cdot, \nabla_{\cdot} X), \quad X\in \Gamma(TM),
\end{equation}
it is clear that $\mathcal{J}_{\varphi}(\dif \varphi(X))=0$ if $X$ is Killing and this yields a lower bound of nullity in terms of the dimension of the isometry group of $M$.  For the Hopf map on $\Ss^3$, post-composition with holomorphic diffeomorphisms (i.e. conformal, but not necessarily isometric) give new harmonic maps of the same energy level. Thus holomorphic vector fields on the target (which are not Killing) yield new genuine Jacobi fields. The full automorphism group of the target is the 6-dimensional M\"obius group $PSL(2,\mathbb{C})$ acting on $\mathbb{CP}^1$. Of its 6 generators, 3 are the rotations (Killing, but already counted in as part of the domain-symmetries) and 3 are the non-compact (hyperbolic) conformal directions gradients of  eigenfunctions of the Laplacian. These provide new Jacobi fields. In total, for the Hopf map $\phi$, we have Jacobi fields that are pushforward of the $5 = 6 - 1$ Killing fields of the domain (subtracting the one in the kernel of $d\phi$) and $3$ extra Jacobi fields from the M\"obius group. Hence $\mathrm{nullity}(\phi)=8$ as it has obtained in \cite{loub, ura}.

In this paper we extend these well-known results on the index and nulity of the Hopf map $\phi$ to any harmonic morphism $\varphi:\Ss^3 \to \Ss^2(1/2) \simeq  \CC P^1$. These maps are known to be \cite{BWclassif} the composition of an isometry of $\Ss^3$, the Hopf fibration and a holomorphic map from $\CC P^1$ into itself. For more information about harmonic morphisms we refer to \cite{BW}. Our main result is

\begin{te}
Let $\psi:\CC P^1 \to \CC P^1$ be a holomorphic map of degree $d$ and $\phi: \Ss^3 \to \CC P^1$ be the Hopf map. Then $\psi \circ \phi$ is harmonic and 
\begin{equation}
\mathrm{index}(\psi \circ \phi)=4d, \qquad \mathrm{nullity}(\psi \circ \phi)=8d.   
\end{equation}
 
\end{te}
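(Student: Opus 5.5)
The plan is to reduce the spectral problem for $\mathcal{J}_{\psi\circ\phi}$ on $\Ss^3$ to a family of $\bar\partial$-Laplacians on line bundles over $\CC P^1$, using the circle action of the Hopf fibration. Write $\varphi=\psi\circ\phi$. The bundle $\varphi^{-1}T\CC P^1 = \phi^{-1}(\psi^{-1}T\CC P^1)$ carries the complex structure of $\CC P^1$. Viewed as a holomorphic Hermitian line bundle with pull-back (Chern) connection, $\psi^{-1}T^{1,0}\CC P^1$ is $\mathcal{O}(2d)$, because $\deg \psi^{*}T\CC P^1 = d\cdot 2$.

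\textbf{Step 1 (Fourier decomposition).} I decompose sections $v\in\Gamma(\varphi^{-1}T\CC P^1)$ under the $S^1$-action generated by the unit vertical Killing field $V$. A section of weight $k$, i.e. one with $\nabla^\varphi_V v=\ii k v$, corresponds to a section of $\mathcal{O}(2d)\otimes\mathcal{O}(k)$ over $\CC P^1$. Here I use that weight-$k$ functions on $\Ss^3$ are sections of $\mathcal{O}(k)$, up to a sign convention.

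\textbf{Step 2 (operator on each weight space).} Since $\phi$ is a Riemannian submersion onto $\Ss^2(1/2)$ with totally geodesic fibres, I split $-\tr(\nabla^\varphi)^2 = \Delta^H - \nabla_V\nabla_V$. On weight $k$ the second term contributes $k^2$. The horizontal part becomes the rough Laplacian of $\mathcal{O}(2d+k)$ over $\CC P^1$. The curvature term is $-\tr R(v,\dif\varphi)\dif\varphi = -K\,\abs{\dif\varphi}^2 v$, which by holomorphicity of $\psi$ is exactly the curvature of the pulled-back connection on $\psi^{-1}T\CC P^1$. A Weitzenböck (Kodaira--Nakano) identity $\nabla^*\nabla = 2\bar\partial^*\bar\partial + (\text{curvature})$ on a line bundle over $\CC P^1$ then cancels this curvature term, and the O'Neill term coming from the twisting of the Hopf connection contributes an explicit constant depending on $k$. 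I expect to obtain, on weight $k$,
\[
\mathcal{J}_\varphi \big|_{k} = 2\bar\partial^*\bar\partial + c(k),
\]
with $c(k)$ a quadratic polynomial in $k$ independent of $d$ in the horizontal part. Hence $\mathcal{J}_\varphi|_k \geq c(k)$, with equality exactly on holomorphic sections.

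\textbf{Step 3 (counting).} I need $c(k)<0$ only for the two weights $k=\pm1$ (or the single relevant sign, with real and imaginary parts giving the pairing). On these weights the negative eigenspace is $H^0(\mathcal{O}(2d-1))$, of complex dimension $2d$ by Riemann--Roch, so of real dimension $4d$. For $d=1$ this recovers the $4$ conformal directions of \cite{soufi}. All higher eigenvalues of $2\bar\partial^*\bar\partial$ on these weights must exceed $-c(\pm1)$, which follows from the explicit spectrum of $\bar\partial^*\bar\partial$ on $\mathcal{O}(m)$ over $\CC P^1$: the first nonzero eigenvalue grows with $m$.

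For the nullity I would look at $c(k)=0$. On weight $0$ this gives $H^0(\mathcal{O}(2d))$, of real dimension $4d+2$, containing the pushforwards of horizontal Killing fields and the Möbius directions. The other zero weights $k=\pm 2$ should give real dimension $4d-2$. The total is $8d$, which for $d=1$ gives $6+2=8$ in agreement with \cite{loub, ura}. All remaining weights must have $c(k)>0$.

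\textbf{Main obstacle.} The hardest step is Step 2: getting the constant $c(k)$ and the correct degree shift exactly right, including orientation conventions, the scaling of $\Ss^2(1/2)$, and the cross term between $\nabla_V$ and the horizontal derivatives caused by the non-integrable horizontal distribution. I would first test the formulas against the known case $d=1$, where the answers must be $4$ and $8$, before running the general count. A secondary check is that the real structure, with $v$ real, does not halve or double the counts.
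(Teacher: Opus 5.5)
Your route is essentially the paper's: Fourier modes along the Hopf fibres, then $\Jj_\varphi|_{\HH_m}=D^*D+m(m-2)$ with $D=\nabla^\varphi_X+J\nabla^\varphi_Y$ corresponding to $\bar\partial$ on $F_m=\psi^*T^{1,0}\CC P^1\otimes L^m\simeq\mathcal{O}(2d-m)$, then Riemann--Roch. Carrying out Step 2 gives $c(m)=m(m-2)$ in the convention $\nabla^\varphi_\xi v=-mJv$. This resolves your hedging: there is a single negative weight $m=1$ (with $c=-1$) and exactly two null weights $m=0,2$, and your counts then agree with the paper's. There is also a small slip: the curvature term is $-\tfrac{K}{2}\abs{\dif\varphi}^2v=-4\lambda^2v$, not $-K\abs{\dif\varphi}^2v$. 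With your factor the cancellation you need in Step 2 would not occur.

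The genuine gap is the spectral gap at $m=1$ in Step 3. You need every nonzero eigenvalue $\mu$ of $D^*D$ on $\HH_1$ to satisfy $\mu>1$. Otherwise there are extra negative directions ($\mu<1$) or extra Jacobi fields ($\mu=1$). You justify this by the explicit spectrum of $\bar\partial^*\bar\partial$ on $\mathcal{O}(n)$, but that spectrum is for the homogeneous Hermitian metric.

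Here the metric and Chern connection on $E=\psi^*T^{1,0}\CC P^1$ are pulled back by $\psi$. Their curvature density is $4\lambda^2=4\tilde\lambda^2\circ\phi$, which is non-constant unless $\psi$ is an isometry. That covers every case with $d\ge 2$, and also the non-isometric M\"obius maps when $d=1$. The holomorphic type fixes $\dim\ker\bar\partial$ through Riemann--Roch, but not the nonzero spectrum, which is not explicitly known.

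The missing idea is to bound the partner operator on weight $-1$ instead. On $\HH_m$ one has $DD^*=-\bigl(\nabla^\varphi_X\nabla^\varphi_X+\nabla^\varphi_Y\nabla^\varphi_Y\bigr)+(4\lambda^2-2m)\operatorname{Id}$. Hence for $w\in\HH_{-1}$ you get $\norm{D^*w}^2\ge\int(4\lambda^2+2)\abs{w}^2\ge 2\norm{w}^2$, using only the pointwise positivity $\lambda^2\ge 0$, not constancy. In your language, this is Kodaira--Nakano on $F_1$-valued $(0,1)$-forms, where the curvature enters with the favourable sign. Since discreteness of $DD^*$ is not known a priori, the paper transfers this bound by a closed-range argument to $D^*D\ge 2$ on $\HH_1\cap(\ker D)^\perp$. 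This gives $\Jj_\varphi\ge 1$ there, which settles both the index and the absence of Jacobi fields at $m=1$.
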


In \cite{trist} Rivi\`ere proved that a low-index harmonic map from $\Ss^3$ into $\Ss^2$ must be a harmonic morphism, that is, up to an isometry of the domain, such a composition of a holomorphic map of $\CC P^1$ and the Hopf fibration. One consequence of the above result, is that the holomorphic map in Rivi\`ere's result must have degree 1; this fact has been also proved in the paper \cite{chen} that appeared during the last stage of the preparation of our manuscript. Our results have been developed independently and have a stronger conclusion giving the exact value of the index and nullity of  $\psi \circ \phi$.   

\section{\textcolor{blue}{\textbf{Preliminaries}}}
The sphere $\Ss^3$ is the set of points $(z_1, z_2) \in \CC^2$ with $\abs{z_1}^2 + \abs{z_2}^2 =1$ endowed with the canonical metric. Denoting $z_j= x_j + \ii y_j$, $j=1,2$, at each point we have the orthonormal frame (of Killing vector fields):
\begin{equation} \label{stdframe}
\begin{split}
\xi     & = - y_1 \partial_{x_1} + x_1 \partial_{y_1} - y_2 \partial_{x_2} + x_2 \partial_{y_2} ,\\
X & =-x_2 \partial_{x_1} + y_2 \partial_{y_1} + x_1 \partial_{x_2} - y_1 \partial_{y_2} ,\\
Y & =-y_2 \partial_{x_1} - x_2 \partial_{y_1} + y_1 \partial_{x_2} + x_1 \partial_{y_2}.
\end{split}
\end{equation}
satisfying the $\mathfrak{su}(2)$ commutation relations:
\begin{equation} \label{stdframebrackets}
\begin{split}
[\xi,X] = -2Y,\qquad 
[\xi,Y] =  2X,\qquad
[X,Y] = -2\xi.
\end{split}
\end{equation}
We note that $\xi$ is the Hopf field tangent to the fibres of the Hopf fibration $\phi:\Ss^3 \to \Ss^2(1/2) \simeq  \CC P^1$, $\phi(z_1,z_2)=\tfrac{1}{2}(\abs{z_1}^2 - \abs{z_2}^2, 2\ov{z}_1 z_2)$,  while $X$ and $Y$ span the horizontal distribution $(\ker d\phi)^\perp$.

In this paper, we consider the Hopf map postcomposed with a holomorphic map.
Recall that holomorphic maps $\psi : \CC P^1 \to \CC P^1$ are rational functions and we can write $\psi(z)=P(z)/Q(z)$ with relatively prime polynomials $P,Q\in\CC[z]$ so that 
$\deg\psi=\max\{\deg P,\deg Q\}$. The Hopf invariant of $\varphi=\psi \circ \phi$ is $\mathcal{Q}(\varphi)=(\deg \psi)^2$.

\begin{de}
We call (horizontally) \emph{$d$-conformal Hopf map} a map of the form  $\varphi := \psi \circ \phi$ where $\psi$ is a holomorphic map of $\Ss^2(1/2) \simeq  \CC P^1$ of degree $d$ and $\phi$ is the Hopf map. When the degree $d$ is not relevant we say simply \textit{conformal Hopf map}.
\end{de}

Since any (anti-)holomorphic map between Riemann surfaces is a weakly conformal
harmonic map \cite{BW},  a conformal Hopf map is by construction harmonic and horizontally weakly conformal, with basic dilation 
$\lambda=\tilde\lambda\circ\phi$, where $\tilde\lambda$ is the conformal factor of $\psi$. Denote by $J$ the complex structure on $\Sigma:=\mathbb{C}P^1$ so that the  above horizontal frame satisfies
\begin{equation}\label{eq:varphi-complex}
d\varphi(Y)=Jd\varphi(X).
\end{equation}
We have $|d\varphi(X)|=|d\varphi(Y)|=\lambda$.

\begin{ex} 
In spherical coordinates $(u,v)$, the holomorphic map $\psi(z) = z^d$, $d\in \NN^*$, viewed as a map from the 2-sphere to itself, takes the form:
$$
(\cos u, \sin u e^{iv}) \mapsto (\cos \tilde{u}, \sin \tilde{u} e^{\ii dv}), \quad
\tilde{u} = 2 \arctan \left(\tan \frac{u}{2}\right)^d\,.
$$
For this map we can compute:
\begin{equation}
\lambda^2 = \frac{d^2 \sin^{2 d - 2}s \, \cos^{2 d - 2}s}{\left(\sin ^{2 d}s + \cos ^{2 d}s\right)^2}, 
\end{equation}
where $\Ss^3$ is parametrized as $(\cos s \, e^{i\phi_1}, \sin s \, e^{i\phi_2})$, $s \in [0, \frac{\pi}{2}]$, $\phi_i \in [0, 2\pi)$.
\end{ex}

The goal of this paper is to compute the index and nullity of this class of mappings. Recall that the Hessian of a conformal Hopf map is given by \cite[(4.8.2)]{BW}
\begin{equation}\label{eq:hessian}
\Hess_{\varphi}(v,v)= \int_{\Ss^3}
\left(
|\nabla^\varphi_\xi v|^2+
|\nabla^\varphi_X v|^2+
|\nabla^\varphi_Y v|^2-
4\lambda^2|v|^2
\right)\,\upsilon_g,
\end{equation}
and the Jacobi operator by
\begin{equation}\label{eq:jacs3}
\mathcal{J}_{\varphi}(v)=-(\nabla^\varphi)_{\xi, \xi}^2 v-(\nabla^\varphi)_{X,X}^2 v-(\nabla^\varphi)_{Y,Y}^2 v - 4\lambda^2v.
\end{equation}
Here $\varphi^{-1}T\Sigma$ is a real rank-two bundle, which we see as a complex line bundle by putting $\ii v:=Jv$. Thus, if the variation fields remain real vector fields along $\varphi$, their space carries the natural complex structure induced by $J$. When identifying 
$(\varphi^{-1}T\Sigma,J)\simeq \varphi^{-1}T^{1,0}\Sigma$, $V$ will denote the corresponding section of the $(1,0)$-bundle.

Let $\mathcal{H}$ be the complex Hilbert space of $L^2$-sections of $\varphi^{-1}T\Sigma$
and $R_t(p):= pe^{\ii t}$, $t\in\mathbb R/2\pi\mathbb Z$ denote the right $\Ss^1$ action on $\Ss^3$.
Since $\varphi\circ R_t=\varphi$, $v(R_t p)$ and $v(p)$ belong to the same fibre $T_{\varphi(p)}\Sigma$. Hence
\begin{equation}\label{eq:Ut}
(U_t v)(p):=v(R_t p)
\end{equation}
defines a representation $U:\Ss^1\to\mathrm{U}(\mathcal H)$, $U(e^{it}):= U_t$, which is unitary and strongly continuous. The first property is due to the fact that  $R_t$ is an isometry of $\Ss^3$ and
therefore preserves the volume measure, while the latter is immediate for smooth sections, and for
$v\in\mathcal H$ it follows by approximating $v$ in $L^2$ by smooth
sections and using that  $U_t$ is unitary.

For $m\in\mathbb Z$, define the $m$-th weight space by 
\begin{equation}\label{eq:Hm}
\mathcal H_m
:=
\{v\in\mathcal H:U_t v=e^{-\ii mt}v\text{ for every }t\}\, ,
\end{equation}
or, equivalently $\mathcal H_m = \{v\in\mathcal H:v(pe^{\ii t})=\cos(mt)v(p)-\sin(mt)Jv(p), \forall t\}$, and use the decomposition theorem for unitary representations of compact groups (see, e.g., \cite[\S3.1 and \S5.1]{folland}) to state that
\begin{equation}\label{desc}
\mathcal H = \widehat{\bigoplus}_{m\in\mathbb Z}\mathcal H_m.
\end{equation}
Recall that $\Gamma(\varphi^{-1}TN)$ denotes the space of \textit{smooth} sections in the pull-back and set
$$
\mathcal H_m^\infty:=\mathcal H_m\cap \Gamma(\varphi^{-1}TN).
$$
We have the characterization (cf. \cite{pss} for a similar construction):
\begin{equation}\label{eq:Hm-smooth}
v\in\mathcal H_m^\infty
\quad\Longleftrightarrow\quad
\nabla^\varphi_\xi v=-mJv.
\end{equation}
due to the fact that,  along the flow of  $\xi$, $\nabla^\varphi_\xi v=-mJv$ translates into a simple ODE for the map $t\mapsto v(pe^{\ii t})\in T_{\varphi(p)}\Sigma$:
\[
\frac{d}{dt}v(pe^{\ii t})=-mJv(pe^{\ii t}).
\]

Define the first-order operator
\begin{equation}
D: \Gamma(\varphi^{-1}T\Sigma) \to \Gamma(\varphi^{-1}T\Sigma), \quad
D=\nabla^\varphi_X+J\nabla^\varphi_Y.
\label{eq:D}
\end{equation}
As $J$ is $\nabla^\Sigma$-parallel and $X,Y$ are divergence free, the $L^2$-adjoint of $D$ is given by 
\begin{equation}
D^*=-\nabla^\varphi_X+J\nabla^\varphi_Y.
\end{equation}

\begin{pr}\label{prop-prelim}
Let $\varphi$ be a $d$-conformal Hopf map.

\noindent $(i)$ The Jacobi operator commutes with the formally self-adjoint operator $J\nabla^\varphi_\xi$:
\begin{equation} \label{Jcommute}
[\mathcal{J}_{\varphi}, J\nabla^\varphi_\xi]=0.
\end{equation}
In particular, each $\mathcal H_m^\infty$ is invariant under $\mathcal J_\varphi$ (and so is the corresponding $L^2$-isotypical subspace for the self-adjoint realization of $\mathcal J_\varphi$).

\noindent $(ii)$ If $v \in \mathcal{H}_m$ is smooth, then 
\begin{equation}\label{HessHm}
\Hess_{\varphi}(v,v)= \int_{\Ss^3}
\left(
|D v|^2
+
m(m-2)|v|^2
\right)\,\upsilon_g,
\end{equation}
and, on $\mathcal{H}_m^\infty$, the Jacobi operator reads
\begin{equation}\label{eq:Jacobi-mode}
\mathcal J_\varphi
=D^*D+m(m-2)\operatorname{Id}.
\end{equation}

\noindent $(iii)$ If $v \in \mathcal{H}_m$ is smooth, then $Dv \in \HH_{m-2}^\infty$.
\end{pr}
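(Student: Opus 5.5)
The whole argument rests on the curvature of the pull-back connection and on the bracket relations \eqref{stdframebrackets}. Since $\Sigma$ is a surface with $J$ parallel, the curvature of $\nabla^\varphi$ is
\[
R^\varphi(A,B)v = K\,\bigl\langle d\varphi(A)\wedge d\varphi(B)\bigr\rangle v .
\]
For $\Ss^2(1/2)$ we have $K=4$. Using \eqref{eq:varphi-complex} and $\xi\in\ker d\varphi$, this gives
\[
R^\varphi(\xi,\cdot)=0, \qquad R^\varphi(X,Y)v = 4\lambda^2 Jv .
\]
The sign is fixed by the orientation, with $\psi$ holomorphic.

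\textbf{Step (i).} Put $L:=J\nabla^\varphi_\xi$. Its formal self-adjointness follows from $\operatorname{div}\xi=0$, the fact that $J$ is parallel, and $J^*=-J$. To show $L$ commutes with $\mathcal J_\varphi$, use the commutators
\[
[\nabla^\varphi_\xi,\nabla^\varphi_X]=\nabla^\varphi_{[\xi,X]}+R^\varphi(\xi,X)=-2\nabla^\varphi_Y,
\qquad
[\nabla^\varphi_\xi,\nabla^\varphi_Y]=2\nabla^\varphi_X .
\]
Then
\[
[\nabla_\xi,\nabla_X^2+\nabla_Y^2]=-2(\nabla_Y\nabla_X+\nabla_X\nabla_Y)+2(\nabla_X\nabla_Y+\nabla_Y\nabla_X)=0,
\]
and trivially $[\nabla_\xi,\nabla_\xi^2]=0$. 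Next, $\lambda$ is basic, so $\xi(\lambda^2)=0$ and $L$ commutes with multiplication by $\lambda^2$. Since $J$ commutes with everything in sight, \eqref{Jcommute} follows. Invariance of $\mathcal H_m^\infty$ follows from \eqref{eq:Hm-smooth}: $\mathcal H_m^\infty$ is the $m$-eigenspace of $L$ within smooth sections, because $J\nabla_\xi v=-mJ^2v=mv$. The $L^2$ statement comes from the fact that the $U_t$ commute with $\mathcal J_\varphi$, since $R_t$ is an isometry preserving $\varphi$.

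\textbf{Step (ii).} The central computation is to expand $|Dv|^2$:
\[
|Dv|^2=|\nabla_Xv|^2+|\nabla_Yv|^2+2\langle \nabla_Xv,J\nabla_Yv\rangle .
\]
Integrate the cross term by parts using divergence-free fields, which gives
\[
\int\langle\nabla_Xv,J\nabla_Yv\rangle
=-\int\langle v,J\nabla_X\nabla_Yv\rangle
=\int\langle J\nabla_Yv,\nabla_Xv\rangle .
\]
Antisymmetrizing,
\[
2\int\langle \nabla_Xv,J\nabla_Yv\rangle=\int\langle v,J[\nabla_Y,\nabla_X]v\rangle .
\]
Now
\[
[\nabla_Y,\nabla_X]=\nabla_{[Y,X]}+R^\varphi(Y,X)=2\nabla_\xi-4\lambda^2J .
\]
On $\mathcal H_m$ we have $\nabla_\xi v=-mJv$, so $J[\nabla_Y,\nabla_X]v=2mv+4\lambda^2v$. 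Hence
\[
\int|Dv|^2=\int\bigl(|\nabla_Xv|^2+|\nabla_Yv|^2+2m|v|^2+4\lambda^2|v|^2\bigr).
\]
Combining this with $|\nabla_\xi v|^2=m^2|v|^2$ in \eqref{eq:hessian} gives \eqref{HessHm}. Polarizing, with $\mathcal H_m^\infty$ invariant, yields \eqref{eq:Jacobi-mode}; equivalently, the pointwise identity $D^*D=-\nabla_X^2-\nabla_Y^2-2m-4\lambda^2$ holds on $\mathcal H_m^\infty$.

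\textbf{Step (iii).} Use the commutators from Step (i):
\[
\nabla_\xi D v=D\nabla_\xi v-2\nabla_Yv+2J\nabla_Xv=-mJDv+2J(\nabla_X v+J\nabla_Y v)=-(m-2)JDv .
\]
By \eqref{eq:Hm-smooth}, this means $Dv\in\mathcal H_{m-2}^\infty$.

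The main obstacle is bookkeeping of signs: the orientation convention relating $d\varphi(Y)=Jd\varphi(X)$ to the sign of $R^\varphi(X,Y)$, and the sign in $[X,Y]=-2\xi$. These must combine so that the $\lambda^2$ terms cancel exactly and the constant comes out as $m(m-2)$ rather than $m(m+2)$. I would first verify this on the Hopf map itself ($\lambda=1$), checking that $m=\pm1$ modes reproduce the known negative directions.
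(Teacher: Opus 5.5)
Your parts (i) and (iii) are correct and follow the paper's argument. Both rest on the commutators $[\nabla^\varphi_\xi,\nabla^\varphi_X]=-2\nabla^\varphi_Y$ and $[\nabla^\varphi_\xi,\nabla^\varphi_Y]=2\nabla^\varphi_X$, on $\xi(\lambda)=0$, and on $[\nabla^\varphi_\xi,D]=2JD$.

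The gap is in (ii), and it is exactly the sign you flagged as the main risk: your value $R^\varphi(X,Y)v=4\lambda^2Jv$ is wrong. You use the convention $[\nabla_U,\nabla_V]=\nabla_{[U,V]}+R(U,V)$. With it, a surface of constant curvature $K$ satisfies $R(e,Je)Je=Ke$ for every unit vector $e$, i.e.\ $R(e,Je)=-KJ$. This is not an orientation choice once $d\varphi(Y)=Jd\varphi(X)$ has been imposed. Hence $R^\varphi(X,Y)=-4\lambda^2J$ and $[\nabla^\varphi_Y,\nabla^\varphi_X]=2\nabla^\varphi_\xi+4\lambda^2J$. On $\mathcal H_m^\infty$ this gives $J[\nabla^\varphi_Y,\nabla^\varphi_X]v=(2m-4\lambda^2)v$, not $(2m+4\lambda^2)v$.

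With your sign, the computation does not produce \eqref{HessHm}. Inserting $\int(|\nabla^\varphi_Xv|^2+|\nabla^\varphi_Yv|^2)=\int\bigl(|Dv|^2-(2m+4\lambda^2)|v|^2\bigr)$ into \eqref{eq:hessian} gives $\int\bigl(|Dv|^2+(m(m-2)-8\lambda^2)|v|^2\bigr)$. The $\lambda^2$ terms add instead of cancelling, so your sentence ``combining \dots\ gives \eqref{HessHm}'' does not follow.

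A concrete check shows the sign error. For the Hopf map take $v=d\phi(X)$.
\begin{itemize}
\item $\nabla^\phi_\xi v=d\phi([\xi,X])=-2Jv$, so $v\in\mathcal H_2^\infty$.
\item $\nabla^\phi_Xv=\nabla^\phi_Yv=0$. Indeed $\nabla d\phi$ vanishes on horizontal pairs for a Riemannian submersion, $\nabla^{\Ss^3}_XX=0$, and $\nabla^{\Ss^3}_YX=\xi\in\ker d\phi$.
\item Hence $Dv=0$, and your identity for $\int|Dv|^2$ would force $8\int|v|^2=0$.
\end{itemize}

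Your pointwise formula $D^*D=-\nabla_X^2-\nabla_Y^2-2m-4\lambda^2$ is also wrong. It is inconsistent even with your own integrated identity, and it would lead to $m(m+2)$. The correct formula on $\mathcal H_m^\infty$ is $D^*D=-(\nabla_X^2+\nabla_Y^2)+J[\nabla_Y,\nabla_X]=-(\nabla_X^2+\nabla_Y^2)+2m-4\lambda^2$. Then $\mathcal J_\varphi=m^2-(\nabla_X^2+\nabla_Y^2)-4\lambda^2=D^*D+m(m-2)$, because the curvature part of the cross term cancels the potential $-4\lambda^2$ exactly. Once this sign is corrected, your argument coincides with the paper's.
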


\begin{proof}
\noindent $(i)$ Set
\[
A:=\nabla_\xi^\varphi,\qquad
B:=\nabla_X^\varphi,\qquad
C:=\nabla_Y^\varphi.
\]
Since, for the chosen orthonormal frame,
$\nabla_\xi^{S^3}\xi=
\nabla_X^{S^3}X=
\nabla_Y^{S^3}Y=0$,
we have, cf. \eqref{eq:jacs3},
\[
\Jj_\varphi
=
-(A^2+B^2+C^2)-4\lambda^2 \mathrm{Id}.
\]

For the pullback connection one has
\[
[\nabla_U^\varphi,\nabla_V^\varphi]
=
\nabla_{[U,V]}^\varphi
+
R^\Sigma\bigl(d\varphi(U),d\varphi(V)\bigr).
\]
Since $d\varphi(\xi)=0$, the curvature term vanishes whenever one of the entries is $\xi$. Hence,
using \eqref{stdframebrackets}
we obtain
\begin{equation} \label{com_nabla_phi}
[\nabla_\xi^\varphi,\nabla_X^\varphi]
=
\nabla_{[\xi,X]}^\varphi
=
-2\nabla_Y^\varphi,
\qquad 
[\nabla_\xi^\varphi,\nabla_Y^\varphi]
=
\nabla_{[\xi,Y]}^\varphi
=
2\nabla_X^\varphi.
\end{equation}
Thus, with the notations above,
we have
\[
[A,B]=-2C,
\qquad
[A,C]=2B.
\]
Hence
\[
[A,B^2+C^2]
=
[A,B]B+B[A,B]+[A,C]C+C[A,C]
=0.
\]
and, obviously, $[A,A^2]=0$.

Since $\lambda$ is constant along the Hopf fibres, $\xi(\lambda)=0$,
and hence
\[
[A,\lambda^2 \mathrm{Id}]=0.
\]
Gathering all commutations proved above, we get  $[A,\mathcal J_\varphi]=0$. Since $J$ is parallel and the curvature term in \eqref{eq:jacs3} is the scalar endomorphism $4\lambda^2\operatorname{Id}$, we obtain \eqref{Jcommute}.

If $v\in\mathcal H_m^\infty$, then $Av=-mJv$, and therefore
\[
A(\mathcal J_\varphi v)
=\mathcal J_\varphi(Av)
=-m\mathcal J_\varphi(Jv)
=-mJ\mathcal J_\varphi v.
\]
Hence $\mathcal J_\varphi v\in\mathcal H_m^\infty$.

\medskip
\noindent \textit{(ii)} Put
\[
I:=\int_{\Ss^3}\langle\nabla_X^\varphi v,J\nabla_Y^\varphi v\rangle\,\upsilon_g.
\]
Since $X$ and $Y$ are divergence free and $J$ is parallel, integration by parts in the $X$- and $Y$-directions gives
\[
2I=\int_{\Ss^3}\left\langle
v,J(\nabla_Y^\varphi\nabla_X^\varphi-\nabla_X^\varphi\nabla_Y^\varphi)v
\right\rangle\,\upsilon_g.
\]
For the pullback connection,
\[
\nabla_Y^\varphi\nabla_X^\varphi-\nabla_X^\varphi\nabla_Y^\varphi
=\nabla_{[Y,X]}^\varphi+R^\Sigma(d\varphi(Y),d\varphi(X)).
\]
Using $[Y,X]=2\xi$ and $\nabla_\xi^\varphi v=-mJv$, the bracket term contributes $2m|v|^2$. Since $\Sigma=\Ss^2(1/2)$ has sectional curvature $4$, while $d\varphi(Y)=Jd\varphi(X)$ and $|d\varphi(X)|=\lambda$, we also have
\[
J R^\Sigma(d\varphi(Y),d\varphi(X))v=-4\lambda^2v.
\]
Consequently
\begin{equation}\label{twoI}
2I=\int_{\Ss^3}(2m-4\lambda^2)|v|^2\,\upsilon_g,
\end{equation}
and therefore
\[
\int_{\Ss^3}|Dv|^2\,\upsilon_g
=
\int_{\Ss^3}
\bigl(|\nabla_X^\varphi v|^2+|\nabla_Y^\varphi v|^2+(2m-4\lambda^2)|v|^2\bigr)\,\upsilon_g.
\]
Since $|\nabla_\xi^\varphi v|^2=m^2|v|^2$, comparison with \eqref{eq:hessian} yields
the desired identity \eqref{HessHm}.

Finally, notice that
\[
D^*D=-(B^2+C^2)+J(CB-BC),
\]
and the computation above gives
\[
J(CB-BC)v=(2m-4\lambda^2)v,
\]
which, together with $A^2v=-m^2v$, yields \eqref{eq:Jacobi-mode} from \eqref{eq:jacs3}.

\medskip
\noindent \textit{(iii)} From \eqref{com_nabla_phi} and $\nabla^\varphi J=0$,
\[
[\nabla_\xi^\varphi,D]
=-2\nabla_Y^\varphi+2J\nabla_X^\varphi
=2JD.
\]
Since $D(Jv)=JDv$, for $v\in\mathcal H_m^\infty$,
\[
\nabla_\xi^\varphi(Dv)
=D(\nabla_\xi^\varphi v)+2JDv
=-mJDv+2JDv
=-(m-2)JDv.
\]
Thus $Dv\in\mathcal H_{m-2}^\infty$.
\end{proof}

\section{\textcolor{blue}{\textbf{Proof of the main theorem}}}

According to Proposition \ref{prop-prelim} and the orthogonal decomposition
\eqref{desc}, the index and nullity of a conformal Hopf map $\varphi$ can be computed from the restrictions of the Jacobi operator to the spaces $\mathcal H_m$.

The following technical lemma gives a coercivity estimate that will be needed later.

\begin{lm}
The following inequalities hold true 
\begin{align}
\norm{D^* w}_{L^2}^2 & \geqslant 2\norm{w}_{L^2}^2,
\quad \forall w \in \HH_{-1}^\infty, \label{DDstar}\\ 
\norm{D v}_{L^2}^2
&\geqslant 2\norm{v}_{L^2}^2,
\quad \forall v \in \HH_{1}^\infty \cap (\ker D)^\perp. \label{DstarD}
\end{align}
\end{lm}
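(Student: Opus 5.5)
The plan is to obtain both inequalities from a Weitzenböck-type comparison between $DD^*$ and $D^*D$ on each weight space, and then to pass from the first inequality to the second by the usual intertwining of nonzero spectra of $DD^*$ and $D^*D$.

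\emph{Step 1 (commutator identity).} With $B=\nabla^\varphi_X$ and $C=\nabla^\varphi_Y$ as in the proof of Proposition \ref{prop-prelim}, expanding and using that $J$ is parallel gives
\[
D^*D=-(B^2+C^2)+J(CB-BC),\qquad DD^*=-(B^2+C^2)+J(BC-CB).
\]
Hence $DD^*=D^*D-2J(CB-BC)$. The computation in part (ii) of Proposition \ref{prop-prelim} shows that $J(CB-BC)w=(2m-4\lambda^2)w$ for $w\in\HH_m^\infty$; that computation used only $\nabla^\varphi_\xi w=-mJw$ and the curvature of $\Sigma$. Therefore on $\HH_m^\infty$
\[
DD^*=D^*D-4m+8\lambda^2 .
\]

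\emph{Step 2 (proof of \eqref{DDstar}).} Take $w\in\HH_{-1}^\infty$ and pair the identity of Step 1 with $w$ in $L^2$, integrating by parts. This gives
\[
\norm{D^*w}_{L^2}^2=\norm{Dw}_{L^2}^2+\int_{\Ss^3}(4+8\lambda^2)|w|^2\,\upsilon_g\geqslant 4\norm{w}^2_{L^2},
\]
which is stronger than \eqref{DDstar}.

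\emph{Step 3 (proof of \eqref{DstarD}).} By \eqref{eq:Jacobi-mode}, $D^*D=\Jj_\varphi+\mathrm{Id}$ on $\HH_1^\infty$. By Proposition \ref{prop-prelim}(i), $\HH_1$ is invariant under the self-adjoint elliptic operator $\Jj_\varphi$. Consequently $\HH_1$ has an $L^2$-orthonormal basis of smooth eigensections $u_k$ of $D^*D$, with eigenvalues $\mu_k\geqslant 0$ of finite multiplicity. One way to see this is to note that $\Jj_\varphi$ commutes with the unitary action $U_t$ of \eqref{eq:Ut}, so the orthogonal projection onto $\HH_1$, namely averaging $e^{\ii t}U_t$ over $t$, preserves each eigenspace of $\Jj_\varphi$ and also preserves smoothness.

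Fix an eigensection $u$ with $\mu>0$. By Proposition \ref{prop-prelim}(iii), $Du\in\HH_{-1}^\infty$, and $Du\neq 0$ because $\norm{Du}^2=\mu\norm{u}^2$. Moreover $DD^*(Du)=D(\mu u)=\mu Du$. Applying Step 2 to $Du$ gives
\[
\mu\norm{Du}^2=\norm{D^*Du}^2\geqslant 2\norm{Du}^2,
\]
so $\mu\geqslant 2$. The eigenvalue $0$ corresponds exactly to $\ker D\cap\HH_1$. Thus $v\in\HH_1^\infty\cap(\ker D)^\perp$ expands as $v=\sum_{\mu_k\geqslant 2}c_ku_k$, and
\[
\norm{Dv}^2=\langle D^*Dv,v\rangle=\sum_k\mu_k|c_k|^2\geqslant 2\norm{v}^2 .
\]
Here the identity $\langle D^*Dv,v\rangle=\sum_k\mu_k|c_k|^2$ is legitimate because $v$ is smooth, so $D^*Dv\in L^2$ has coefficients $\mu_kc_k$ by self-adjointness.

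\emph{Main obstacle.} The only delicate point is the functional-analytic justification in Step 3: that the restriction of $\Jj_\varphi$ to $\HH_1$ has discrete spectrum with smooth eigensections spanning $\HH_1$, and that for $v\in\HH_1$ orthogonality to $\ker D$ means orthogonality to the $0$-eigenspace of $D^*D$ in $\HH_1$. I would handle this through ellipticity of $\Jj_\varphi$ together with the averaging projection onto $\HH_1$ described above. All remaining steps are algebraic consequences of Proposition \ref{prop-prelim}.
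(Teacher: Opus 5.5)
Your proof is correct, but for the second inequality it takes a genuinely different route from the paper.

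For the first inequality, both arguments rest on \eqref{twoI}. The paper expands $\norm{D^*w}^2$ and simply discards $\norm{\nabla^\varphi_X w}^2+\norm{\nabla^\varphi_Y w}^2$, which leaves $\int(4\lambda^2+2)\abs{w}^2$. Your identity $\norm{D^*w}^2=\norm{Dw}^2+\int(4+8\lambda^2)\abs{w}^2$ keeps that gradient term, bounded below via $\norm{Dw}^2\ge 0$, and so gives the better constant $4$. For the Hopf map, where $\lambda\equiv1$, your Step 3 then yields $\mu\ge 12$, which is exactly the first positive eigenvalue of $D^*D$ on $\HH_1$.

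For the second inequality, the paper works with operator theory:
\begin{itemize}
\item it passes to the closed $L^2$-realization $D:\mathcal D(D)\subset\HH_1\to\HH_{-1}$;
\item it extends the first estimate by density to the domain of the Hilbert-space adjoint;
\item it uses the closedness of $\operatorname{Ran}D^*$ and the closed-range theorem to get $\ker D^*=0$ and $\operatorname{Ran}D^*=(\ker D)^\perp$;
\item it writes $v=D^*w$ and concludes by a Cauchy--Schwarz duality step.
\end{itemize}
This needs no spectral information and gives the surjectivity of $D$ onto $\HH_{-1}$ as a by-product. The price is some care with operator domains.

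You instead use that $D^*D=\Jj_\varphi+\mathrm{Id}$ has discrete spectrum with smooth eigensections on $\HH_1$. You justify this correctly, via ellipticity of $\Jj_\varphi$ and the $U_t$-averaging projection preserving its finite-dimensional eigenspaces. You combine it with the intertwining $DD^*(Du)=\mu\,Du$, so the first inequality applied to the single smooth section $Du$ forces $\mu\ge 2$. This needs discreteness on one side only. That is precisely the obstacle the paper points to (not knowing a priori that $DD^*$ has discrete spectrum) and sidesteps with the closed-range theorem. Your argument also stays entirely in the smooth category.
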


\begin{proof}
Let $w\in\mathcal H_m^\infty$. Expanding the square and using
\eqref{twoI}, we have
\begin{align*}
\|D^*w\|_{L^2}^2
&=
\|\nabla_X^\varphi w\|_{L^2}^2
+
\|\nabla_Y^\varphi w\|_{L^2}^2
-2\int_{\Ss^3}
\langle\nabla_X^\varphi w,J\nabla_Y^\varphi w\rangle\,\upsilon_g
\\
&=
\|\nabla_X^\varphi w\|_{L^2}^2
+
\|\nabla_Y^\varphi w\|_{L^2}^2
+
\int_{\Ss^3}(4\lambda^2-2m)|w|^2\,\upsilon_g.
\end{align*}
For $m=-1$ this gives \eqref{DDstar} that extends from smooth sections to the whole domain
by density:
\begin{equation}\label{eq:Dstar-gap}
\|D^*w\|_{L^2}^2\ge 2\|w\|_{L^2}^2,
\qquad w\in\mathcal H_{-1}.
\end{equation}
Here we consider
\[
D:\mathcal D(D)\subset\mathcal H_1\longrightarrow\mathcal H_{-1}
\]
be the closed $L^2$-realization of the first-order operator $D$ and $D^*$ to be its adjoint.

We now deduce the second estimate. Notice that even if $D^*D$ has discrete spectrum by \eqref{eq:Jacobi-mode}   we do not know a priori if same is true for $DD^*$ (if this were already known, the conclusion would follow immediately from the fact that \(DD^*\) and \(D^*D\) have the same nonzero eigenvalues, counted with multiplicity). 
Firstly, observe that since $D^*$ is closed and
we have \eqref{eq:Dstar-gap}, its range $\operatorname{Ran}D^*$ is closed. 

As $\operatorname{Ran}D^*$ is closed, by the closed-range theorem,
$\operatorname{Ran}D$ is also closed. Since \eqref{eq:Dstar-gap} gives
$\ker D^*=\{0\}$,
\[
\operatorname{Ran}D=(\ker D^*)^\perp=\mathcal H_{-1},
\qquad
\operatorname{Ran}D^*=(\ker D)^\perp.
\]
Thus, if $v\in\mathcal H_1\cap(\ker D)^\perp$, $v=D^*w$ for some $w\in\mathcal H_{-1}$ and
\[
\|v\|_{L^2}^2
=\langle D^*w,v\rangle_{L^2}
=\langle w,Dv\rangle_{L^2}
\le \|w\|_{L^2}\|Dv\|_{L^2}.
\]
By \eqref{eq:Dstar-gap},
$\|v\|_{L^2}=\|D^*w\|_{L^2}\ge\sqrt2\|w\|_{L^2}$, and therefore
$\|Dv\|_{L^2}^2\ge2\|v\|_{L^2}^2$.
\end{proof}


\begin{lm}[The negative space] \label{lema-index}
Let $\varphi$ be a conformal Hopf map.
Then the negative space of the Jacobi operator of $\varphi$ is exactly
$\ker\left(D|_{\mathcal H_1^\infty}\right)$.
\end{lm}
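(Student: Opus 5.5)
We use the weight decomposition \eqref{desc} together with Proposition \ref{prop-prelim}(i): $\mathcal J_\varphi$ preserves each $\mathcal H_m$. It therefore suffices to find, for each $m\in\mathbb Z$, the nonpositive part of the spectrum of $\mathcal J_\varphi|_{\mathcal H_m}$. By elliptic regularity every eigenvector is smooth, so we may work on $\mathcal H_m^\infty$. There, by \eqref{eq:Jacobi-mode},
\[
\mathcal J_\varphi=D^*D+m(m-2)\operatorname{Id},
\qquad
\langle \mathcal J_\varphi v,v\rangle_{L^2}=\norm{Dv}_{L^2}^2+m(m-2)\norm{v}_{L^2}^2 .
\]

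\emph{Step 1: weights $m\neq 1$.} For $m\le 0$ or $m\ge 2$ we have $m(m-2)\ge 0$. Hence $\langle\mathcal J_\varphi v,v\rangle\ge 0$ on $\mathcal H_m^\infty$, and no negative eigenvalues occur. Only $m=0$ and $m=2$ can contribute to the kernel; they are irrelevant for the negative space. Since the negative space is spanned by eigenvectors with negative eigenvalue, it lies entirely in $\mathcal H_1$.

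\emph{Step 2: weight $m=1$.} Here $\mathcal J_\varphi=D^*D-\operatorname{Id}$ with $D:\mathcal H_1\to\mathcal H_{-1}$. The operator $D^*D$ is nonnegative, self-adjoint and has discrete spectrum. It commutes with the orthogonal splitting $\mathcal H_1=\ker D\oplus(\ker D)^\perp$, where $\ker D$ is closed because $D$ is closed.

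On $\ker D$ we get $\mathcal J_\varphi=-\operatorname{Id}$, so every element of $\ker(D|_{\mathcal H_1^\infty})$ is an eigenvector with eigenvalue $-1$. Smoothness is automatic, since $\ker D\subset\ker D^*D$ and $D^*D$ is elliptic on this weight space.

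On $(\ker D)^\perp$, estimate \eqref{DstarD} gives $\norm{Dv}^2\ge 2\norm v^2$. Therefore
\[
\langle\mathcal J_\varphi v,v\rangle\ge\norm v^2>0 \qquad (v\neq 0),
\]
so there are no negative (or zero) eigenvalues there. Combining the two pieces, the negative eigenspaces of $\mathcal J_\varphi$ are exactly $\ker(D|_{\mathcal H_1^\infty})$, all with eigenvalue $-1$.

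\emph{Main obstacle.} The delicate point is justifying that eigenvectors of the full $\mathcal J_\varphi$ split into weight components, each of which is itself an eigenvector. Take an eigenvector $v$ with $\mathcal J_\varphi v=\mu v$, $\mu<0$. Its projections $v_m$ onto $\mathcal H_m$ are obtained by averaging $e^{\ii mt}U_t v$. Because $R_t$ is an isometry preserving $\varphi$, $U_t$ commutes with $\mathcal J_\varphi$, so each $v_m$ satisfies $\mathcal J_\varphi v_m=\mu v_m$ and is smooth. Step 1 forces $v_m=0$ for $m\neq1$, and Step 2 forces $v_1\in\ker D$.

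A related point to verify is that $D^*D$ on $\mathcal H_1^\infty$ preserves $(\ker D)^\perp\cap\mathcal H_1^\infty$. This holds because $D^*D$ is self-adjoint and preserves $\ker D$.
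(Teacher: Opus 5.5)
Your proposal is correct and follows the paper's own proof: you split by weight via \eqref{eq:Jacobi-mode}, rule out negative eigenvalues for $m\neq 1$ because $m(m-2)\ge 0$, and on $\mathcal H_1$ combine $\mathcal J_\varphi=D^*D-\operatorname{Id}$ with the estimate \eqref{DstarD} on $(\ker D)^\perp$. Your averaging argument, which shows that each weight component of a negative eigenvector is itself a smooth eigenvector, makes explicit a step the paper leaves implicit in the invariance statement of Proposition~\ref{prop-prelim}(i).
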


\begin{proof}
By \eqref{eq:Jacobi-mode},
\[
\mathcal J_\varphi|_{\mathcal{H}_m^\infty}
=
D^*D+m(m-2)\operatorname{Id}.
\]
If $m\le0$ or $m\ge2$, then $m(m-2)\ge0$, so no negative eigenvalue
can occur in these modes. On $\mathcal H_1^\infty$,
\begin{equation}\label{HessH1}
\mathcal J_\varphi=D^*D-\operatorname{Id}.
\end{equation}
For $v\in\ker(D|_{\mathcal H_1^\infty})$ we therefore have
\[
\mathcal J_\varphi v=-v,
\]
so $\ker(D|_{\mathcal H_1^\infty})$ is a negative eigenspace. On the orthogonal
complement of $\ker D$ in $\mathcal H_1^\infty$, the estimate \eqref{DstarD} gives
$D^*D\ge2\operatorname{Id}$, hence
\[
\mathcal J_\varphi\ge\operatorname{Id},
\]
and the whole negative spectral subspace is exactly
$\ker(D|_{\mathcal H_1^\infty})$.
\end{proof}

\begin{lm}[The kernel] \label{lema-nul}
The kernel of the Jacobi operator of a conformal Hopf map is given by
\begin{equation}\label{eq:nullity-two-modes}
\ker \Jj_\varphi
=
\ker\left(D|_{\mathcal H_0^\infty}\right)
\oplus
\ker\left(D|_{\mathcal H_2^\infty}\right).
\end{equation}
\end{lm}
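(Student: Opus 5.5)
The plan is to use the mode decomposition \eqref{desc} together with Proposition \ref{prop-prelim}(i). Since $\mathcal J_\varphi$ commutes with $J\nabla^\varphi_\xi$, any Jacobi field splits as $v=\sum_m v_m$ with $v_m\in\mathcal H_m$, and $\mathcal J_\varphi v=0$ holds iff $\mathcal J_\varphi v_m=0$ for every $m$. By elliptic regularity each $v_m$ is smooth: $v_m$ is the image of $v$ under the projection $\frac{1}{2\pi}\int_0^{2\pi}e^{\ii mt}U_t\,dt$, which preserves smoothness. So it suffices to determine $\ker\mathcal J_\varphi\cap\mathcal H_m^\infty$ for each $m$, using \eqref{eq:Jacobi-mode}, namely $\mathcal J_\varphi=D^*D+m(m-2)\operatorname{Id}$ on $\mathcal H_m^\infty$.

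\textbf{Step 1: the modes $m\neq 0,1,2$.} Pairing $\mathcal J_\varphi v=0$ with $v$ and using \eqref{HessHm} gives
\[
0=\|Dv\|_{L^2}^2+m(m-2)\|v\|_{L^2}^2 .
\]
For $m\le -1$ or $m\ge 3$ we have $m(m-2)>0$, so $v=0$.

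\textbf{Step 2: the modes $m=0,2$.} Here $m(m-2)=0$, and the same identity yields $\|Dv\|^2=0$. Hence $\ker\mathcal J_\varphi\cap\mathcal H_m^\infty=\ker(D|_{\mathcal H_m^\infty})$. Conversely, $Dv=0$ gives $\mathcal J_\varphi v=D^*Dv=0$.

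\textbf{Step 3: the mode $m=1$.} On $\mathcal H_1^\infty$ we have $\mathcal J_\varphi=D^*D-\operatorname{Id}$, as in \eqref{HessH1}. Write $v=v_0+v_\perp$ with $v_0\in\ker D$ and $v_\perp\in(\ker D)^\perp$. Since $D^*D$ preserves this splitting, $\mathcal J_\varphi v=0$ gives $-v_0=0$ and $D^*Dv_\perp=v_\perp$. Pairing the second equation with $v_\perp$ gives $\|Dv_\perp\|^2=\|v_\perp\|^2$. By \eqref{DstarD} this is at least $2\|v_\perp\|^2$, so $v_\perp=0$. This is exactly the argument of Lemma \ref{lema-index}: on $\mathcal H_1$ the spectrum lies in $\{-1\}\cup[1,\infty)$, so $0$ is not an eigenvalue. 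This step is the only delicate one.

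The main obstacle is the functional-analytic care in Step 3. One must check that the orthogonal splitting along $\ker D$ is compatible with smooth sections, so that \eqref{DstarD}, which is proved for $\mathcal H_1^\infty\cap(\ker D)^\perp$ or its closure, applies to $v_\perp$. Since $\ker D$ on $\mathcal H_1$ is the $-1$ eigenspace of the elliptic operator $\mathcal J_\varphi$, it consists of smooth sections. Hence $v_\perp=v-v_0$ is smooth whenever $v$ is.

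Summing over the modes gives \eqref{eq:nullity-two-modes}. The sum is direct, and in fact orthogonal, because $\mathcal H_0\perp\mathcal H_2$.
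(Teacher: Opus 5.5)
Your proposal is correct and follows the paper's own proof: you analyse each weight space using $\mathcal J_\varphi=D^*D+m(m-2)\operatorname{Id}$, get strict positivity for $m\le -1$ and $m\ge 3$, reduce to $\ker D$ for $m=0,2$, and exclude $m=1$ via the spectral gap from \eqref{DstarD}, exactly as in Lemma~\ref{lema-index}. Your extra remarks, that the mode projections preserve smoothness and that $v_\perp$ is smooth because $\ker D\cap\mathcal H_1$ is a finite-dimensional eigenspace of the elliptic operator $\mathcal J_\varphi$, fill in details the paper leaves implicit.
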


\begin{proof}
Again, by \eqref{eq:Jacobi-mode}, if $m\le-1$ or $m\ge3$ then
\[
m(m-2)>0,
\]
so $\mathcal J_\varphi$ is strictly positive on $\mathcal H_m^\infty$. For
$m=1$, the proof of Lemma~\ref{lema-index} shows that
\[
\mathcal J_\varphi=-\operatorname{Id}
\quad\text{on }\ker D,
\qquad
\mathcal J_\varphi\ge\operatorname{Id}
\quad\text{on }(\ker D)^\perp,
\]
so this weight contributes no Jacobi fields. Finally, for $m=0$ and
$m=2$,
\[
\mathcal J_\varphi=D^*D,
\]
and therefore
\[
\ker(\mathcal J_\varphi|_{\mathcal H_m^\infty})
=
\ker(D|_{\mathcal H_m^\infty}).
\]
Since the weight spaces are mutually orthogonal and invariant under
$\mathcal J_\varphi$, this proves \eqref{eq:nullity-two-modes}.
\end{proof}

The identifications \eqref{eq:assoc-correspondence} and \eqref{eq:D-dbar-correspondence} below seem to be already know in the literature cf. \cite[\S 3]{marin}, but for the convenience of the reader we provide a proof. For the background on complex geometry we refer to \cite{Ball}.

\begin{lm} \label{identification-lm}
Let $\varphi$ be a $d$-conformal Hopf map, set
$E:=\psi^*T^{1,0}\Sigma$ and let $L\simeq\mathcal O(-1)$ be the tautological line bundle over $\Sigma=\CC P^1$. Denote $F_m:=E\otimes L^m$. 

In the smooth category, there is a natural complex-linear one-to-one
correspondence
\begin{equation}\label{eq:assoc-correspondence}
\mathcal H_m^\infty
\simeq
\Gamma(F_m), \qquad V \leftrightarrow s_V.
\end{equation}
Under this correspondence,
\begin{equation}\label{eq:D-dbar-correspondence}
DV=0
\quad\Longleftrightarrow\quad
\bar\partial_{F_m}s_V=0.
\end{equation}
In particular,
\begin{equation*} \label{identification-eq}
\ker\left(D|_{\mathcal H_m^\infty}\right)
\simeq
H^0(\Sigma,F_m),
\end{equation*}
and therefore, when $2d\ge m$,
\begin{equation} \label{identific-dim}
\dim_\CC \ker\left(D|_{\mathcal H_m^\infty}\right)=2d+1-m.
\end{equation}
\end{lm}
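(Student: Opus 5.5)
The plan is to realize $\mathcal H_m^\infty$ as sections of an associated line bundle over $\Sigma$, using the principal $\Ss^1$-bundle structure of the Hopf fibration. Write $\phi:\Ss^3\to\CC P^1$ for this bundle; it is the unit circle bundle of the tautological bundle $L\simeq\mathcal O(-1)$, with $p=(z_1,z_2)\mapsto [p]$ and $p$ itself a unit vector in the fibre $L_{[p]}$. Since $\varphi=\psi\circ\phi$, a section $V$ of $\varphi^{-1}T^{1,0}\Sigma$ is a map $p\mapsto V(p)\in E_{\phi(p)}$. The condition $V\in\mathcal H_m^\infty$ says $V(pe^{\ii t})=e^{-\ii mt}V(p)$, once the paper's convention \eqref{eq:Hm} ($Jv$ corresponds to $\ii V$) is used. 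Such equivariant maps correspond to sections of $E\otimes L^{\pm m}$ via
\[
s_V(\phi(p)):=V(p)\otimes p^{\otimes m},
\]
where $p^{\otimes m}\in L^m$. Under $p\mapsto pe^{\ii t}$ we get $e^{-\ii mt}e^{\ii mt}=1$, so $s_V$ is well defined. First I will check the sign convention, i.e. that the action on $L$ matches $R_t$ with the correct orientation, since the sign decides $L^m$ versus $L^{-m}$. Conversely, $V(p):=s(\phi(p))\otimes p^{\otimes(-m)}$, where $p^{-1}$ is the dual element. Smoothness is preserved because $\phi$ is a submersion with local sections. Complex linearity is clear.

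Next comes the identification \eqref{eq:D-dbar-correspondence}. The horizontal distribution spanned by $X,Y$ is the connection of the Chern (Hermitian) connection on $L$, the standard connection whose curvature is the Fubini--Study form. The horizontal complex vector field $Z=\tfrac12(X-\ii Y)$ (or its conjugate, depending on orientation) projects under $d\phi$ to a $(0,1)$-vector on $\Sigma$. Using $d\varphi(Y)=Jd\varphi(X)$, $D=\nabla_X+J\nabla_Y$ acts on the $(1,0)$-part as $2\nabla^\varphi_{\bar Z}$-type differentiation. Since $p^{\otimes m}$ is parallel along horizontal curves for the Chern connection of $L^m$, we get
\[
s_{DV}=c\,\nabla^{F_m}_{\bar w}s_V
\]
for a nonzero constant or function $c$, with $\bar w=d\phi(\bar Z)$. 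Here $\nabla^{F_m}$ is the tensor product of $\psi^*\nabla^\Sigma$ and the Chern connection on $L^m$. On $E=\psi^*T^{1,0}\Sigma$, the $(0,1)$-part of the pullback Levi-Civita connection is the holomorphic structure of $\psi^*T^{1,0}\Sigma$, because $\psi$ is holomorphic and the Levi-Civita connection of a Kähler curve is its Chern connection. Hence $\nabla^{F_m,(0,1)}=\bar\partial_{F_m}$, and $DV=0\iff\bar\partial_{F_m}s_V=0$. This also matches Proposition \ref{prop-prelim}(iii): $DV\in\mathcal H^\infty_{m-2}$ corresponds to $\Gamma(F_m\otimes\bar K)\simeq\Gamma(F_{m-2})$ via the metric, since $\bar K\simeq T^{1,0}\simeq L^{-2}$. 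This gives a consistency check on the sign.

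Finally we count. $\deg E=\deg\psi^*\mathcal O(2)=2d$ and $\deg L^m=-m$, so $F_m\simeq\mathcal O(2d-m)$. By Riemann--Roch on $\CC P^1$ (or directly via homogeneous polynomials), $h^0(\mathcal O(k))=k+1$ for $k\ge0$. This gives $2d+1-m$ when $2d\ge m$.

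The main obstacle is the bookkeeping in the second step. We have to verify that the horizontal distribution is exactly the Chern connection of $L$, and that $D$ corresponds to the $(0,1)$-derivative rather than the $(1,0)$-one. The orientation signs (the sign of $m$ in $L^{m}$ versus $L^{-m}$, and $X-\ii Y$ versus $X+\ii Y$) must be fixed consistently with $d\varphi(Y)=Jd\varphi(X)$ and $\nabla_\xi v=-mJv$. I would settle these by a local computation in the chart $z=z_2/z_1$, using the local section $p=(1,z)/\sqrt{1+|z|^2}$, and by the check against Proposition \ref{prop-prelim}(iii) above.
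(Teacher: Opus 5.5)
Your proposal is correct and follows essentially the same route as the paper. Both use the associated-bundle correspondence $V\mapsto V\otimes p^{\otimes m}$ (the paper's $[p,1]^{\otimes m}$), identify the horizontal distribution $\operatorname{span}\{X,Y\}$ with the Chern connection of $L\simeq\mathcal O(-1)$ so that $D$ becomes $2(\nabla^{F_m})^{0,1}=2\bar\partial_{F_m}$ pointwise, and finish with Riemann--Roch for $F_m\simeq\mathcal O(2d-m)$. One small slip: since $d\phi(Y)=Jd\phi(X)$, it is $\bar Z=\tfrac12(X+\ii Y)$, not $Z$, that projects to $T^{0,1}\Sigma$; your later formulas ($D=2\nabla_{\bar Z}$, $\bar w=d\phi(\bar Z)$) already use this correctly, and your planned sign check in the chart $z_2/z_1$ makes explicit a point the paper simply asserts.
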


\begin{proof}
Recall that the Hopf fibration
is an $\Ss^1$-principal bundle $\Ss^3\xrightarrow{\,\phi\,}\Sigma$ with right action $p\cdot e^{\ii t}=pe^{\ii t}$.  The complex line bundle  $L$ is defined as the associated bundle
\[
L:=\Ss^3\times_{\Ss^1}\CC,
\qquad
(p,z)\sim(pe^{\ii t},e^{-\ii t}z)
\]
and the map
\[
[p,z]\longmapsto zp\in\CC p
\]
identifies $L$ with the tautological bundle  $\mathcal O(-1)$.\\
A section $V$ of $\phi^*E=\varphi^{-1}T^{1,0}\Sigma$ belongs to $\mathcal H_m^\infty$ if it satisfies
\[
V(pe^{\ii t})=e^{-\ii mt}V(p).
\]
For $x=\phi(p)$ define
\begin{equation}\label{eq:sV}
s_V(x):=V(p)\otimes[p,1]^{\otimes m}
\in(E\otimes L^m)_x.
\end{equation}
Replacing by $pe^{\ii t}$, yields
\[
V(pe^{\ii t})=e^{-\ii mt}V(p),
\qquad 
[pe^{\ii t},1]^{\otimes m}
=
e^{\ii mt}[p,1]^{\otimes m},
\]
so \eqref{eq:sV} is independent of the choice of the point of the fibre. Conversely, every
section of $E\otimes L^m$ determines uniquely an equivariant section
of $\phi^*E$. This proves \eqref{eq:assoc-correspondence}.

To translate the equation $DV=0$ in terms of $s_V$, let $\nabla^E=\nabla^\psi$ be the pullback connection of the Levi-Civita connection; as $\psi$ is holomorphic and the
Levi-Civita connection on $T^{1,0}\Sigma$ agrees with the Chern connection,
$\nabla^E$ is the Chern connection of $E$.

 Let $\alpha$ be the
connection form of the Hopf fibration, normalized by
\[
\alpha(\xi)=1,
\qquad
\ker\alpha=\operatorname{span}\{X,Y\}.
\]
The induced unitary connection $\nabla^L$ in the associated line bundle
$L$ coincides with its Chern connection. More precisely, if $\sigma:\mathcal U\to \Ss^3$ is a local section of the Hopf fibration and $\mathcal A:=\sigma^*\alpha$,
then, in the local unitary frame $\ell:=[\sigma,1]$, one has
\[
\nabla^L\ell=\ii \Aa\otimes\ell.
\]
Consequenty $\nabla^{L^m}\ell^m=\ii m\Aa\otimes\ell^m$ and the Chern connection of $F_m=E\otimes L^m$ is 
\[
\nabla^{F_m}
=
\nabla^E\otimes \mathrm{id}_{L^m} + \mathrm{id}_{E}\otimes\nabla^{L^m}.
\]
In particular, writing this locally for $B\in \Gamma(T\Sigma)$ and $s_V=\tilde{V}\otimes\ell^m \in \Gamma(E\otimes L^m)$, where $\tilde{V} =V\circ\sigma$, we get
\[
\nabla_B^{F_m}s_V
=
\bigl(\nabla_B^E \tilde{V} +\ii m \Aa(B)\tilde{V}\bigr)\otimes\ell^m.
\]
On the other hand, let $B^H$ be the horizontal lift of $B$ along $\sigma$:
\[
B^H=d\sigma(B)-\Aa(B)\xi
\]
and observe that we have
\[
\nabla_{B^H}^\varphi V
=
\nabla_B^E \tilde{V}+\ii m\Aa(B)\tilde{V}\, ,
\]
where we have used $\nabla_\xi^\varphi V=-\ii mV$ and $V=\phi^* \tilde{V}$.

Thus the pull-back connection on $\phi^* E \to \Ss^3$ corresponds to the Chern connection on $F_m \to \Sigma$. We use this now to prove \eqref{eq:D-dbar-correspondence}.

At a point of $\mathcal{U}$, the domain of $\sigma$, put
\[
e_1=d\phi(X),\qquad e_2=d\phi(Y)=Je_1.
\]
As $J$ operates on $E$ as multiplication by $\ii$, under the correspondence \eqref{eq:assoc-correspondence},
\[
DV=\nabla_X^\varphi V+J\nabla_Y^\varphi V
\quad\longleftrightarrow\quad
\nabla_{e_1}^{F_m}s_V+\ii\nabla_{e_2}^{F_m}s_V
=
2(\nabla^{F_m})^{0,1}_{e_1}s_V \,.
\]
Since $\nabla^{F_m}$ is the Chern connection,
\[
(\nabla^{F_m})^{0,1}=\bar\partial_{F_m},
\]
and the correspondence ~\eqref{eq:D-dbar-correspondence} follows.

Finally, $T^{1,0}\Sigma\simeq\mathcal O(2)$,
so
\[
E=\psi^*T^{1,0}\Sigma\simeq\mathcal O(2d)
\quad \mbox{ and } \quad
F_m=E\otimes L^m\simeq\mathcal O(2d-m).
\]

By Riemann--Roch on $\Sigma=\mathbb{CP}^1$  (see e.g. \cite[Corollary 5.6.3]{jo}),
\[
\dim_{\mathbb C}H^0(\Sigma,\mathcal O(k))=k+1,
\qquad k\ge0.
\]
Hence, if $2d-m\ge0$,
\[
\dim_\CC\ker(D|_{\mathcal H_m})
=
\dim_\CC H^0(\Sigma,\mathcal O(2d-m))
=
2d+1-m.
\]
This proves the lemma.
\end{proof}

By Lemma \ref{lema-index} and \eqref{identific-dim} with $m=1$,
\[
\dim_\CC\ker(D|_{\mathcal H_1^\infty})=2d.
\]
The negative space is $J$-invariant, hence it is a complex vector space,
whereas the Morse index computes its real dimension. Therefore
\[
\operatorname{index}(\varphi)=2\cdot 2d=4d.
\]
Likewise, Lemma \ref{lema-nul} and \eqref{identific-dim} give
\[
\dim_\CC\ker\mathcal J_\varphi
=
(2d+1)+(2d-1)=4d.
\]
Since the Jacobi kernel is also $J$-invariant,
\[
\operatorname{nullity}(\varphi)=2\cdot4d=8d.
\]
This completes the proof of the main theorem.

\section*{Disclaimer on the use of AI tools}
\noindent The publicly available version of GPT-6 Astra was used in the preparation of this manuscript. All mathematical statements and proofs have been developed and written by the authors, who take
full responsibility for the mathematical content and the final text.

\end{document}